\newcommand\textcode[1]{{\footnotesize \rm\texttt{#1}}}   
\renewcommand\subsubsection{\@startsection{subsubsection}{3}%
  \z@{.5\linespacing\@plus.7\linespacing}{-.5em}% 
  {\normalfont\bfseries}} 
\newcommand\makequotestraight{% 
\begingroup\lccode`~=`' 
\lowercase{\endgroup\let~}\textquotesingle
\catcode`'=\active
}
\theoremstyle{mythm}      
\newtheorem{theorem}{Theorem}[section]
\newtheorem{lemma}[theorem]{Lemma}
\theoremstyle{mydef}
\theoremstyle{myrem}
\numberwithin{equation}{section}
\newcounter{ithmcount}
\renewcommand{\leq}{\leqslant} 
\newcommand{\PSL}{{\rm PSL}}
\newcommand{\MM}{\mathbf{M}}
\newcommand{\GG}{\mathbf{G}}
\newcommand{\Gx}{2^{1+24}\udot{\rm Co}_1}
\newcommand{\mt}[1]{{\rm #1}}
\newcommand{\udot}{\mathpalette\udot@\relax}
\newcommand{\udot@}[2]{%
  \begingroup
  \sbox\z@{$#1{:}$}%
  \sbox\tw@{$#1{.}$}%
  \raisebox{\dimexpr\ht\z@-\ht\tw@}{$\m@th#1.$}%
  \endgroup
}
\begin{document}

\title{Conjugacy class fusion from four maximal subgroups of the Monster}
\subjclass[2000]{} 
\author[A. Pisani]{Anthony Pisani}
\author[T. Popiel]{Tomasz Popiel}
\address[Pisani, Popiel]{School of Mathematics, Monash University, Clayton VIC 3800, Australia.}
\email{\rm apis0001@student.monash.edu, tomasz.popiel@monash.edu}
\thanks{{\em Acknowledgements.} We thank Thomas Breuer for bringing the problem to our attention, and for valuable feedback which helped to improve the paper. 
We also thank Heiko Dietrich and Melissa Lee for helpful conversations related to the paper.}
%\keywords{finite simple groups, sporadic simple groups, Monster group, maximal subgroups}
\date{\today}

\begin{abstract}
We determine the conjugacy class fusion from certain maximal subgroups of the Monster to the Monster, to justify the addition of these data to the Character Table Library in the computational algebra system \textsf{GAP}.
The maximal subgroups in question are $(\PSL_2(11) {\times} \PSL_2(11)){:}4$, $11^2{:}(5 {\times} 2\text{A}_5)$, $7^2{:}\text{SL}_2(7)$, and $\PSL_2(19){:}2$. 
Our proofs are supported by reproducible calculations carried out using the Python package \textcode{mmgroup}, a computational construction of the Monster recently developed by Seysen.
\end{abstract}

\maketitle

\thispagestyle{empty}

%%%%%%%%%%%%%%%%%%%%%%%
%%%%%%%% SECTION 1 %%%%%%%%
%%%%%%%%%%%%%%%%%%%%%%%
\section{Introduction}\label{sec_intro} 

\noindent The Monster, $\MM$, is the largest of the $26$ sporadic finite simple groups. 
Computing with $\MM$ is notoriously difficult, owing to a lack of sufficiently small representations that would allow $\MM$ to be constructed in a standard computational algebra system such as \textsf{GAP} \cite{gap}. 
The Python package \textcode{mmgroup} developed by Seysen \cite{sey20,sey22,sey_python,sey_python_docs} addresses this issue by providing a stand-alone construction of $\MM$ in which the group multiplication can be performed efficiently. 
With the aid of \textcode{mmgroup}, it is now possible, at least in principle, to tackle various open problems concerning $\MM$ that seem unamenable to theoretical methods. 
We say ``in principle'' because \text{mmgroup} (understandably) currently lacks much of the functionality for computing with groups that one is accustomed to having at hand in e.g. \textsf{GAP}. 
In particular, there is no easy way to construct centralisers or normalisers of arbitrary elements or subgroups, nor to solve the conjugacy problem for arbitrary conjugacy classes. 
The first major application of \textcode{mmgroup} is the recent completion of the classification of the maximal subgroups of $\MM$ by Dietrich, Lee, and the second author \cite{ourpaper}. 
Evidently, the current limitations of \textcode{mmgroup} render the task of constructing maximal subgroups of $\MM$ in \textcode{mmgroup} highly non-trivial; we refer the reader to e.g. \cite[Remarks 3.6, 4.4, and 7.4]{ourpaper} for discussions of some techniques that can be used to address this problem. 

The purpose of this paper is to construct, in \textcode{mmgroup}, copies of the maximal subgroups 
$$(\PSL_2(11) {\times} \PSL_2(11)){:}4, \; 11^2{:}(5 {\times} 2\mt{A}_5), \; 7^2{:}\text{SL}_2(7), \; \text{and } \; \PSL_2(19){:}2$$
of $\MM$, and to determine the conjugacy class fusion from each subgroup to $\MM$. 
That is, for each conjugacy class $\mathcal{C}$ of each subgroup, we determine the conjugacy class of $\MM$ that contains~$\mathcal{C}$. 
These data, which are presented in Theorems \ref{thm:L2(11)}, \ref{thm:Sylow11}, \ref{thm:7B2}, and \ref{thm:PGL2(19)}, were added to the \textsf{GAP} Character Table Library~\cite{GAPbc} (v.~1.3.8) based on an earlier draft of this paper.  
(The Character Table Library itself is an indispensable resource with myriad applications; see e.g. \cite{CTLdoc2}.) 
With one exception, the fusion question has been answered for all other maximal subgroups of $\MM$; in particular, $\PSL_2(29){:}2$ is handled in the file ``other\_gens'' in the GitHub repository \cite{ourfile} associated with the paper \cite{ourpaper}. 
The exception is the $2$-local maximal subgroup $2^{5+10+20}.(\mt{S}_3 {\times} \PSL_5(2))$. 
The issue in this case is that the character table of the group is not known. 
We do not attempt to address this case here, but we note that the first author has constructed a copy of $2^{5+10+20}.(\mt{S}_3 {\times} \PSL_5(2))$ in \textcode{mmgroup}, so it should now be possible to produce a `small' representation of the group and then calculate the character table computationally. 

Generating sets for the aforementioned maximal subgroups of $\MM$ are presented in Lemmas \ref{propL211xL211}, \ref{prop11^2}, \ref{prop7^2}, and \ref{propL219}. 
In the interest of brevity, we make no attempt to explain how we found these generating sets; as noted above, such searches are, typically, very difficult to carry out in \textcode{mmgroup}. 
We also note that the first author is currently working, as part of his Honours project at Monash University, to construct all of the maximal subgroups of $\MM$ that are not considered either here or in \cite{ourpaper,ourfile}.

%%%%%%%%%%%%%%%%%%%%%%%
%%%%%%%% SECTION 2 %%%%%%%%
%%%%%%%%%%%%%%%%%%%%%%%

\section{Preliminaries} \label{secNotation}

Notation mostly follows the Atlas \cite{atlas}, but we denote a projective special linear group by $\PSL_n(q)$ instead of $\text{L}_n(q)$. 
We use $n$ to denote a cyclic group of order $n$, and $\mt{A}_n$ and $\mt{S}_n$ to denote the alternating and symmetric groups of degree $n$. 
An extension of a group $B$ by a group $A$ is denoted by either $A.B$ or $AB$, where $A$ is the normal subgroup. 
If an extension is known to be split (respectively, non-split) and we wish to emphasise this, then we write $A{:}B$ (respectively, $A\udot B$). 
An elementary abelian group of order $p^k$ is denoted by $p^k$, and $p^{k+\ell}$ means $p^k.p^\ell$, for $p$ a prime and $k, \ell \in \mathbb{Z}^+$. 
Notation for conjugacy classes follows the Atlas and the \textsf{GAP} Character Table Library. 

Those parts of our proofs that rely on \textcode{mmgroup} are reproducible using only very basic functionality of \textcode{mmgroup}, which we now outline. 
For a broader introduction to \textcode{mmgroup}, we refer the reader to the \textcode{mmgroup} documentation \cite{sey_python_docs}. 
Basic group operations are performed using the `obvious' Python syntax: multiplication is done via \textcode{*}, powering and conjugation are both done via \textcode{**}, and inversion corresponds to powering by $-1$. 
Less obviously, \textcode{mmgroup} implements a distinguished conjugate $\GG < \MM$, called $G_{x0}$ in the documentation, of the $2\text{B}$-involution centraliser $\Gx$. 
Specifically, $\GG$ is the centraliser of the $2\text{B}$-involution $z \in \MM$ with \textcode{mmgroup} label $\textcode{M<x\_1000h>}$. 
Membership in $\GG$ can be tested via the method \textcode{in\_G\_x0}. 
If $g \in \GG$, then the method \textcode{chi\_G\_x0} can be used to calculate $\chi_\MM(g)$, where $\chi_\MM$ is the character of the $196883$-dimensional irreducible $\mathbb{C}\MM$-module. 
Specifically, $\chi_\MM(g)$ is given by \textcode{g.chi\_G\_x0()[0]}. 
We stress that \textcode{chi\_G\_x0} cannot be applied to elements outside of $\GG$.
The order of $g \in \MM$ is given by \textcode{g.order()}. 
If $g$ has order $2$, then the method \textcode{conjugate\_involution} returns an integer $1$ or $2$ according to whether $g \in 2\text{A}$ or $2\text{B}$, and an element $c \in \MM$ that conjugates $g$ to a fixed representative of $g^\MM$. 
If $g \in 2\text{B}$, then $c$ conjugates $g$ to $z \in Z(\GG)$. 
This allows us to conjugate an arbitrary element $h \in C_\MM(g)$ into $\GG$, and thereby calculate $\chi_\MM(h)$. 

The \textcode{mmgroup} calculations described in our proofs are fully documented in an accompanying Python file \cite{ourfile_new}. 
We frequently refer to various code listings containing several elements of $\MM$ in \textcode{mmgroup} format; these are collected in Appendix~\ref{app:listings}.
Note that we often use subscripts to indicate element orders, e.g. the element $x_2 \in \MM$ in Listing~\ref{fig:secL211} has order $2$.

%%%%%%%%%%%%%%%%%%%%%%%
%%%%%%%% SECTION 3 %%%%%%%%
%%%%%%%%%%%%%%%%%%%%%%%

\section{The maximal subgroup $(\PSL_2(11) {\times} \PSL_2(11)){:}4$} \label{secL211}

According to the Atlas \cite[p.~234]{atlas}, a maximal subgroup of $\MM$ of shape $(\PSL_2(11) {\times} \PSL_2(11)){:}4$ is the normaliser of a direct product of two copies of $\PSL_2(11)$, both of which are of ``type AAA'', meaning that all of their elements of orders $2$, $3$, and $5$ lie in the conjugacy classes $2\text{A}$, $3\text{A}$, and $5\text{A}$ of~$\MM$. 
It is well known that $\MM$ has a unique conjugacy class of subgroups $\PSL_2(11) {\times} \PSL_2(11)$ in which both direct factors are of type AAA, but we give a proof for the sake of clarity. 
By \cite[Theorem~19]{NW02}, there is a unique $\MM$-class of subgroups $\PSL_2(11)$ of type AAA. 
If $P$ is such a subgroup, then, by \cite[Table~5]{N98}, its centraliser is isomorphic to $\text{M}_{12}$. 
There are two classes of $\PSL_2(11)$ in $\text{M}_{12}$: the maximal ones, and those contained in a maximal $\text{M}_{11}$. 
In $C_\MM(P) \cong \text{M}_{12}$, the latter are not of type AAA, because the $\text{M}_{11}$ that contain them are not of type AAA; indeed, by \cite[Table~5]{N98}, the centraliser of a type-AAA $\text{M}_{11} < \MM$ has shape $\mt{S}_6.2$, so such an $\text{M}_{11}$ does not centralise any $\PSL_2(11)$.

\begin{lemma} \label{propL211xL211}
The elements $x_2,x_{11},x_4 \in \MM$ in Listing \ref{fig:secL211} generate a maximal subgroup of \hspace{0.01pt} $\MM$ of shape $(\PSL_2(11) {\times} \PSL_2(11)){:}4$. 
\end{lemma}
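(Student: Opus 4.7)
The plan is to verify, via explicit \textcode{mmgroup} computations, that the subgroup $H := \langle x_2, x_{11}, x_4 \rangle$ contains a direct product $K \cong \PSL_2(11) {\times} \PSL_2(11)$ in which both factors are of type AAA, and then to invoke the uniqueness noted in the paragraph preceding the lemma together with the Atlas classification of maximal subgroups.

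First I would isolate candidate generators for the two $\PSL_2(11)$ factors. Since $x_2, x_{11}$ already have the orders $2$ and $11$ typical of a $\PSL_2(11)$ generating pair, I would set $P_1 := \langle x_2, x_{11} \rangle$ and define a second pair $a_2, b_2$ by conjugating $x_2, x_{11}$ by an appropriate power of $x_4$, letting $P_2 := \langle a_2, b_2 \rangle$. For each pair I would then verify a standard two-generator presentation of $\PSL_2(11)$ — for instance $\langle a, b \mid a^2 = b^{11} = (ab)^3 = (ab^4)^5 = 1 \rangle$, or an equivalent presentation tailored to the specific elements at hand — by checking the relations in \textcode{mmgroup}. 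Since $\PSL_2(11)$ is simple and each subgroup visibly contains an element of order $11$, each $P_i$ is then isomorphic to $\PSL_2(11)$.

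Next I would verify element-wise commutation between $P_1$ and $P_2$ by checking that the four commutators $[a_i, a_j], [a_i, b_j], [b_i, a_j], [b_i, b_j]$ ($i \neq j$) are trivial, so that $K := P_1 P_2 \cong \PSL_2(11) {\times} \PSL_2(11)$ lies in $H$. To establish the type AAA condition I would select representatives of orders $2, 3, 5$ in each $P_i$ as short words in $a_i, b_i$. The $\MM$-class of an involution is determined directly by \textcode{conjugate\_involution}; for an element $g$ of order $3$ or $5$, I would follow the recipe described in Section~\ref{secNotation}, locating an involution centralising $g$, using \textcode{conjugate\_involution} to produce a conjugator sending it to $z$, and then applying \textcode{chi\_G\_x0} to the conjugate of $g$ to distinguish $3\text{A}$ from $3\text{B}, 3\text{C}$ (respectively, $5\text{A}$ from $5\text{B}$) via the known values of $\chi_\MM$.

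By the uniqueness statement in the paragraph preceding the lemma, $K$ is contained in a unique conjugate of the maximal subgroup $N := (\PSL_2(11) {\times} \PSL_2(11)){:}4$ of $\MM$, so $\langle K, x_4 \rangle \leq N$. To conclude I would show $x_4 \notin K$ and $x_4^2 \notin K$, which combined with $x_4^4 = 1$ gives $[\langle K, x_4 \rangle : K] = 4$ and hence $\langle K, x_4 \rangle = N$; since $H = \langle K, x_4 \rangle$ by construction, $H = N$, and maximality of $H$ in $\MM$ is inherited from the Atlas. The main obstacle I anticipate is not any individual step but the cumulative bookkeeping: choosing a presentation of $\PSL_2(11)$ whose relations can actually be verified on the candidate generators, and correctly executing the sequence of element-order and character computations in \textcode{mmgroup} (in particular, finding the centralising involutions needed to conjugate order-$3$ and order-$5$ elements into $\GG$).
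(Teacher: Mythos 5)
There is a genuine gap at the step where you pass from $K \cong \PSL_2(11) \times \PSL_2(11)$ to $\langle K, x_4 \rangle \leq N$. The uniqueness of the $\MM$-class of type-AAA subgroups $\PSL_2(11) \times \PSL_2(11)$ tells you that $N_\MM(K)$ is a maximal subgroup of the desired shape, but it does not tell you that $\langle K, x_4 \rangle$ lies inside $N_\MM(K)$, nor even inside any proper subgroup of $\MM$: for that you need $x_4$ to normalise $K$, which you never verify. (Nor is it true that $N_\MM(K)$ is the only maximal overgroup of $K$ — for instance $K$ also lies in a conjugate of the maximal subgroup $(\PSL_2(11) \times \text{M}_{12}){:}2$, since the second factor centralises the first — so the containment cannot be rescued by appealing to uniqueness alone.) Your index count $[\langle K, x_4 \rangle : K] = 4$ likewise presupposes $K \trianglelefteq \langle K, x_4 \rangle$. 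Taking $P_2 = P_1^{x_4}$, the missing ingredient is precisely that $x_4^2$ normalises $P_1$, so that conjugation by $x_4$ permutes the two direct factors and hence preserves $K$; this is exactly the computation the paper performs.

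A secondary issue: you say you would ``show $x_4 \notin K$ and $x_4^2 \notin K$'' but give no method, and \textcode{mmgroup} has no generic membership test for a subgroup of order $660^2$. The paper's route is indirect: it checks that $x_4^2$ centralises $x_2$ and inverts $x_{11}$, so that if $x_4^2$ lay in $A = P \times P^{x_4}$ its $P$-component would be an element of the odd-order group $N_P(\langle x_{11} \rangle) \cong 11{:}5$ inverting $x_{11}$, which is impossible; and $x_4 \notin K$ then follows from $x_4^2 \notin K$. Apart from these points your outline matches the paper's argument (presentation plus Von Dyck for each factor, elementwise commutation, character values to identify the classes $2\text{A}$, $3\text{A}$, $5\text{A}$), with two harmless redundancies: the type-AAA check for $P_2$ is automatic since $P_2$ is $\MM$-conjugate to $P_1$, and since $x_2, x_{11}$ already lie in the distinguished $2\text{B}$-centraliser $\GG$, the whole of $P_1$ does, so $\chi_\MM$ can be evaluated directly without hunting for centralising involutions.
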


\begin{proof}
As previously noted, the calculations described here and throughout the paper are documented in an accompanying Python file \cite{ourfile_new}. 
Let $P = \langle x_2,x_{11} \rangle$. 
We first verify that $a=x_2$ and $b=x_{11}$ satisfy the presentation $\langle a,b \mid a^2 = b^{11} = (ba)^3 = (b^2ab^6a)^3 = 1 \rangle$ for $\PSL_2(11)$ given in \cite[Theorem~A]{BM68}. 
Because $\PSL_2(11)$ is simple, it follows from Von Dyck's Theorem that $P \cong \PSL_2(11)$.
A further calculation shows that $x_2$ and $x_{11}$ lie in the distinguished $2\text{B}$-centraliser $\GG<\MM$, so we can evaluate the character $\chi_\MM$ of the $196883$-dimensional irreducible $\mathbb{C}\MM$-module on all elements of $P$, as explained in Section~\ref{secNotation}. 
The character table of $\MM$ shows that $\chi_\MM(g)$ determines the $\MM$-class of an element $g$ of order $i$ for each $i \in \{2,3,5\}$, so we readily verify that $P$ contains elements of classes $2\text{A}$, $3\text{A}$, and $5\text{A}$. 
(Elements of orders $2$ and $3$ are exhibited in the above presentation, and e.g. $x_2x_{11}^3$ has order~$5$.) 
We then check that $P$ commutes with but is not equal to its conjugate under $x_4$. 
To verify the latter, it suffices to check that $x_{11}^{x_4} \not \in \langle x_{11} \rangle$, because $\langle x_{11} \rangle = C_P(x_{11})$. 
The group $A = \langle P,P^{x_4} \rangle$ is therefore isomorphic to $\PSL_2(11) {\times} \PSL_2(11)$, and it follows from the discussion above that $N_\MM(A)$ is a maximal subgroup of the desired type. 
We now claim that $x_4$ extends $A$ to $N_\MM(A)$. 
We find that $x_4$ has order $4$, and that $x_4^2$ normalises $P$, so $x_4$ normalises $A$ (but lies outside of it). 
More specifically, $x_4^2$ centralises $x_2$ and inverts $x_{11}$, so $x_4^2$ does not lie in $A$,  because if it did then it would project to $N_P(\langle x_{11} \rangle) \cong 11{:}5$, which has odd order. 
Therefore, $B=\langle x_4 \rangle$ intersects $A$ trivially, so $AB = N_\MM(A)$, as claimed. 
\end{proof}

\begin{theorem} \label{thm:L2(11)}
If $S$ is a maximal subgroup of \hspace{0.01pt} $\MM$ of shape $(\PSL_2(11) {\times} \PSL_2(11)){:}4$, then the conjugacy classes of $S$ fuse in $\MM$ as indicated in Table~\ref{table1}.
\end{theorem}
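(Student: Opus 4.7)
The plan is to enumerate a representative of each conjugacy class of $S$ as a word in the generators $x_2, x_{11}, x_4$ from Lemma~\ref{propL211xL211}, and to determine the $\MM$-class of each representative by combining element orders with values of the character $\chi_\MM$. The character table of $S \cong (\PSL_2(11){\times}\PSL_2(11)){:}4$ is determined by its structure and the (known) character tables of $\PSL_2(11)$ and its extensions, so the number of $S$-classes and the orders and $S$-centraliser sizes of their representatives are known in advance. This gives a checklist of classes to process, and matching orders together with $S$-centraliser orders unambiguously pairs each representative we construct in \textcode{mmgroup} with an abstract class label of $S$.

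For a representative $g \in S$, we compute $|g|$ via \textcode{g.order()} and then attempt to evaluate $\chi_\MM(g)$. Since \textcode{chi\_G\_x0} only applies to elements of $\GG$, we must first exhibit a conjugate $g^c$ lying in $\GG$. The standard device, described in Section~\ref{secNotation}, is to use an involution: if $g$ has even order $2m$, then $t=g^m$ is an involution, and \textcode{conjugate\_involution} either identifies $t$ as $2\mathrm{B}$ and returns $c \in \MM$ with $t^c = z \in Z(\GG)$ (in which case $g^c \in C_\MM(z) = \GG$), or identifies $t$ as $2\mathrm{A}$, in which case we either replace $g$ with a $\GG$-conjugate found by a short random search or exploit a different power of $g$. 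For odd-order $g$, we similarly search over random conjugators until $g^c \in \GG$, then call \textcode{chi\_G\_x0}.

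With $|g|$ and $\chi_\MM(g)$ in hand, we compare against the character table of $\MM$ to identify the $\MM$-class of~$g$. For most pairs $(|g|,\chi_\MM(g))$ the $\MM$-class is determined uniquely, and recording the result for a representative of each $S$-class produces Table~\ref{table1}. The main obstacle will be $S$-classes whose element order $n$ is shared by several $\MM$-classes with coincident $\chi_\MM$ values (for example, algebraically conjugate pairs, or classes only distinguished by higher-order information). In such cases we resolve the ambiguity by computing $\chi_\MM(g^k)$ for small $k$ coprime to $|g|/\gcd(k,|g|)$ and comparing with the power maps of $\MM$ from the Atlas; because the order of an element of $S$ divides $4 \cdot \mathrm{lcm}(1,2,3,5,6,11) = 660$, only finitely many ambiguous orders can occur and each is handled by one such power-map check.
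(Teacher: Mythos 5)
Your proposal takes a genuinely different route from the paper -- a brute-force identification of every $S$-class, rather than the paper's strategy of running \textsf{PossibleClassFusions} on the character tables of $S$ and $\MM$ first -- but it contains a gap that is fatal in practice. The device you rely on for evaluating $\chi_\MM$ on elements of odd order (and on elements of even order whose involution power lies in $2\text{A}$) is to ``search over random conjugators until $g^c \in \GG$''. This cannot work: $\GG = \Gx$ has index roughly $5.8 \times 10^{27}$ in $\MM$, so the probability that a random conjugate of $g$ lands in $\GG$ is astronomically small, and no certified conjugator is produced by such a search. This is precisely the obstruction that the paper's proof is engineered around: the only elements whose $\chi_\MM$-value can feasibly be computed are those of even order powering to a $2\text{B}$-involution $t$, because \textcode{conjugate\_involution} then returns a provably correct $c$ with $t^c = z$ and hence $g^c \in C_\MM(z) = \GG$. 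Since Table~\ref{table1} contains odd-order classes whose fusion is genuinely ambiguous ($3\text{a}$ vs.\ $3\text{b}$, $5\text{ab}$ vs.\ $5\text{cde}$, $15\text{ab}$, $33\text{a}$, \ldots), your method cannot determine their images. The paper avoids computing any of these directly: \textsf{PossibleClassFusions} leaves exactly two candidate fusion maps, differing only in whether $20\text{c\--f}$ go to $20\text{E}$ or $20\text{F}$, and the single element needed to decide this ($x_{20}=x_{11}x_2x_4$, of order $20$ and powering to $2\text{B}$) is exactly of the type that \emph{can} be conjugated into $\GG$.

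A secondary weakness is your claim that element order together with $S$-centraliser order ``unambiguously pairs each representative with an abstract class label of $S$''. Centralisers are not computable in \textcode{mmgroup}, so this pairing would have to be mediated by an explicit isomorphism to a permutation-group copy of $S$, and even then the identification of classes with the labels of \textsf{CharacterTable("(L2(11)xL2(11)):4")} is only defined up to table automorphisms; since several $S$-classes of equal order fuse to \emph{different} $\MM$-classes (e.g.\ the order-$6$, order-$10$, and order-$12$ classes), this needs more care than you give it. Both issues would need to be repaired before the argument could be accepted.
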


\begin{proof}
Take $S$ to be the subgroup $\langle x_2,x_{11},x_4 \rangle$ of $\MM$ constructed in Lemma~\ref{propL211xL211}. 
We first apply the \textsf{GAP} function \textsf{PossibleClassFusions} to the character tables of $S$ and $\MM$ to obtain a list of possible class fusions. 
The character tables of $S$ and $\MM$ are stored in \textsf{GAP} as \textsf{CharacterTable("(L2(11)xL2(11)):4")} and \textsf{CharacterTable("M")}, respectively. 
Let us label the $S$-classes according to their labelling in the former, using lowercase letters to distinguish them from $\MM$-classes, which we label with uppercase letters as before. 
There are two possible class fusions, distinguished by whether the $S$-classes $20\text{c--f}$ fuse to the $\MM$-class $20\text{E}$ or $20\text{F}$, so it suffices to determine the $\MM$-class of one element in the union of $20\text{c--f}$, i.e. \textsf{PossibleClassFusions} then determines how the remaining $S$-classes fuse in $\MM$. 
Note that $S$ has exactly two other classes of elements of order $20$, labelled $20\text{a}$ and $20\text{b}$. 
(The elements in $20\text{a--b}$ are powers of elements of order $60$ in $S$, and the elements in $20\text{c--f}$ are not.) 
The list of possible class fusions indicates that $20\text{a--b}$ fuse to $20\text{B}$. 
According to the power maps in the character table of $\MM$, elements in $20\text{B}$ power to $2\text{A}$. 
On the other hand, elements in $20\text{E}$ and $20\text{F}$ power to $2\text{B}$, so every element of order $20$ in $S$ that powers to $2\text{B}$ lies in one of $20\text{c--f}$. 
We find that one such element is $x_{20} = x_{11}x_2x_4$ and then proceed as described in Section~\ref{secNotation} to calculate $\chi_\MM(x_{20}) = 2$, which indicates that $x_{20} \in 20\text{E}$. 
\end{proof}

\begin{table}[t]
\begin{tabular}{llllll}
\hline
$2\text{a} \rightarrow 2\text{A}$ & $4\text{b} \rightarrow 4\text{B}$ & $6\text{b} \rightarrow 6\text{E}$ & $10\text{cd} \rightarrow 10\text{D}$ & $12\text{fg} \rightarrow 12\text{E}$ & $24\text{abcd} \rightarrow 24\text{H}$ \\
$2\text{bc} \rightarrow 2\text{B}$ & $4\text{cd} \rightarrow 4\text{D}$ & $6\text{c} \rightarrow 6\text{C}$ & $10\text{ef} \rightarrow 10\text{B}$ & $15\text{ab} \rightarrow 15\text{A}$ & $30\text{ab} \rightarrow 30\text{B}$ \\
$3\text{a} \rightarrow 3\text{A}$ & $5\text{ab} \rightarrow 5\text{A}$ & $6\text{e} \rightarrow 6\text{D}$ & $10\text{g} \rightarrow 10\text{E}$ & $20\text{ab} \rightarrow 20\text{B}$ & $33\text{a} \rightarrow 33\text{B}$ \\
$3\text{b} \rightarrow 3\text{B}$ & $5\text{cde} \rightarrow 5\text{B}$ & $8\text{ab} \rightarrow 8\text{D}$ & $12\text{abe} \rightarrow 12\text{H}$ & $20\text{cdef} \rightarrow 20\text{E}$ & $60\text{abcd} \rightarrow 60\text{A}$ \\
$4\text{a} \rightarrow 4\text{C}$ & $6\text{ad} \rightarrow 6\text{A}$ & $10\text{ab} \rightarrow 10\text{A}$ & $12\text{cd} \rightarrow 12\text{C}$ & $22\text{a} \rightarrow 22\text{A}$ & $66\text{a} \rightarrow 66\text{A}$ \\
\hline
\end{tabular}
\caption{Data for Theorem~\ref{thm:L2(11)}. 
Conjugacy class fusion from a maximal subgroup $S$ of $\MM$ of shape $(\PSL_2(11) {\times} \PSL_2(11)){:}4$ to $\MM$. 
The conjugacy classes of $S$ are labelled as in \textsf{CharacterTable("(L2(11)xL2(11)):4")} in \textsf{GAP}. 
Lowercase letters are used for $S$-classes, uppercase letters are used for $\MM$-classes, and e.g. $2\text{bc} \rightarrow 2\text{B}$ means that the $S$-classes $2\text{b}$ and $2\text{c}$ fuse to the $\MM$-class $2\text{B}$.
Elements of orders $11$ and $55$ (and $1$) are omitted from the table because $\MM$ has unique classes of elements of these orders.
}
\label{table1}
\end{table}

%%%%%%%%%%%%%%%%%%%%%%%
%%%%%%%% SECTION 4 %%%%%%%%
%%%%%%%%%%%%%%%%%%%%%%%

\section{The maximal subgroup $11^2{:}(5 {\times} 2\mt{A}_5)$} \label{sec11^2}

A maximal subgroup of $\MM$ of shape $11^2{:}(5 {\times} 2\mt{A}_5)$ is the normaliser of a Sylow $11$-subgroup. 

\begin{lemma} \label{prop11^2}
The elements $x_{11},y_{11},x_3,x_4,x_5 \in \MM$ in Listing~\ref{fig:sec11^2} generate a maximal subgroup of \hspace{0.01pt} $\MM$ of shape $11^2{:}(5 {\times} 2\mt{A}_5)$. 
\end{lemma}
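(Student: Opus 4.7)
The plan follows the template of Lemma~\ref{propL211xL211}. First, I would verify that $x_{11}$ and $y_{11}$ each have order $11$, that they commute, and that $y_{11} \notin \langle x_{11}\rangle$, so that $Q := \langle x_{11}, y_{11}\rangle$ is elementary abelian of order $121$. Since $|\MM|_{11} = 11^2$, this $Q$ is a Sylow $11$-subgroup of $\MM$, whose normalizer $N_\MM(Q)$ is, by the remark preceding the lemma, a maximal subgroup of $\MM$ of shape $11^2{:}(5 \times 2\mt{A}_5)$ and order $72600$.

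Next, for each $i \in \{3,4,5\}$ I would compute $x_{11}^{x_i}$ and $y_{11}^{x_i}$ in \textcode{mmgroup} and verify that each lies in $Q$, so that $x_3, x_4, x_5$ normalise $Q$. Writing these conjugates in the $\mathbb{F}_{11}$-basis $\{x_{11}, y_{11}\}$ of $Q$ yields explicit matrices $M_3, M_4, M_5 \in \GL_2(\mathbb{F}_{11})$ describing the induced action of each generator. I would then use \textsf{GAP} to verify that $\bar H := \langle M_3, M_4, M_5\rangle$ has order $600$ (and indeed structure $5 \times 2\mt{A}_5$).

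Let $H := \langle x_3, x_4, x_5\rangle$ and $S := \langle Q, H\rangle = QH$, which is a subgroup of $N_\MM(Q)$ because $H$ normalises $Q$. Since $C_\MM(Q) \supseteq Q$, the conjugation map $H \to \bar H$ has kernel containing $H \cap Q$, so $|H|/|H \cap Q| \geq |\bar H| = 600$. Therefore
\[
|S| \;=\; \frac{|Q|\,|H|}{|Q \cap H|} \;\geq\; 121 \cdot 600 \;=\; 72600 \;=\; |N_\MM(Q)|,
\]
forcing $|S| = |N_\MM(Q)|$ and hence $S = N_\MM(Q)$, a maximal subgroup of $\MM$ of the claimed shape.

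The main obstacle is extracting the matrices $M_i$ cleanly: in \textcode{mmgroup} this amounts to identifying each conjugate $x_{11}^{x_i}$ or $y_{11}^{x_i}$ with the unique element $x_{11}^{a}y_{11}^{b} \in Q$ that it equals, i.e.\ a small discrete logarithm computation inside the finite abelian group $Q$. Once the $M_i$ are in hand, the order and structure check is routine in \textsf{GAP}. If that matrix extraction proves awkward, an alternative is to verify inside \textcode{mmgroup} that $x_3, x_4, x_5$ satisfy a presentation of $5 \times 2\mt{A}_5$ (with $x_5$ commuting with $x_3$ and $x_4$, and $(x_3, x_4)$ satisfying a standard presentation of $\mt{SL}_2(5) \cong 2\mt{A}_5$ that distinguishes it from $\mt{A}_5$ via the central involution), then combine Von Dyck's theorem with the order bound $|\bar H| \geq 600$ from the matrix image to pin down $H$.
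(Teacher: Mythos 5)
Your proposal is correct and follows the same skeleton as the paper's proof: verify that $Q=\langle x_{11},y_{11}\rangle$ is an elementary abelian Sylow $11$-subgroup, show that $x_3,x_4,x_5$ normalise it, and then use an order count against the known normaliser $N_\MM(Q)$ of shape $11^2{:}(5\times 2\mt{A}_5)$ and order $72600$. The one step you handle differently is the certification that the complement is large enough: you propose computing the induced matrices $M_3,M_4,M_5\in\GL_2(\mathbb{F}_{11})$ and checking $|\langle M_3,M_4,M_5\rangle|=600$ in \textsf{GAP}, whereas the paper works intrinsically in $\MM$, checking that $x_5$ has order $5$ and commutes with $x_3$ and $x_4$, that $x_4^2$ is central in $\langle x_3,x_4\rangle$, and that $x_3,x_4$ satisfy a presentation of $\mt{A}_5$ modulo $\langle x_4^2\rangle$, which identifies $B=\langle x_3,x_4,x_5\rangle\cong 5\times 2\mt{A}_5$ directly (and then $B\cap Q=1$ for free since $\gcd(600,11)=1$). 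Your fallback option in the final paragraph is essentially the paper's route. Your main route buys a cleaner logical argument -- the inequality $|H|/|H\cap Q|\geq|\bar H|$ sidesteps having to pin down the isomorphism type of $H$ or its intersection with $Q$ -- at the cost of the (routine but slightly fiddly) discrete-logarithm extraction of the matrices $M_i$ inside the $121$-element group $Q$; the paper's route avoids that extraction but leans on presentation-checking and a small argument about central extensions of $\mt{A}_5$. Both are sound.
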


\begin{proof}
We first check that $x_{11}$ and $y_{11}$ have order $11$, commute, and are not powers of each other, so that $A = \langle x_{11},y_{11} \rangle$ is a Sylow $11$-subgroup of $\MM$. 
A further calculation shows that $B = \langle x_3,x_4,x_5 \rangle$ normalises $A$, and that $x_5$ commutes with $x_3$ and $x_4$. 
Moreover, $x_2 = x_4^2$ is central in $\langle x_3,x_4 \rangle$, and $a=x_3$ and $b=x_4$ satisfy the well-known presentation $\langle a,b \mid a^2 = b^2 = (ab)^5 = 1 \rangle$ for $\mt{A}_5$ modulo $\langle x_2 \rangle$. 
Therefore, $B \cong 5 {\times} 2\mt{A}_5$. 
Given that $B \cap A$ is trivial, it follows that $AB = N_\MM(A)$. 
\end{proof}

\begin{theorem} \label{thm:Sylow11}
If $S$ is a maximal subgroup of \hspace{0.01pt} $\MM$ of shape $11^2{:}(5 {\times} 2\mt{A}_5)$, then the conjugacy classes of $S$ fuse in $\MM$ as indicated in Table~\ref{table2}.
\end{theorem}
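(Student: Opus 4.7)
The plan is to mirror the strategy used in the proof of Theorem~\ref{thm:L2(11)}. Take $S = \langle x_{11}, y_{11}, x_3, x_4, x_5\rangle$ to be the explicit copy of $11^2{:}(5{\times}2\mt{A}_5)$ constructed in Lemma~\ref{prop11^2}. First, I would invoke the \textsf{GAP} function \textsf{PossibleClassFusions} on the character tables of $S$ and $\MM$ in the Character Table Library to obtain the list of class fusions consistent with element orders, power maps, and scalar products against $\MM$-irreducibles.

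For each $S$-class whose image under the fusion is not already pinned down by this list, I would pick a representative $g$ as a short word in the generators. I would then read off the remaining candidate $\MM$-classes and use the power-map data in the character table of $\MM$ to discard any candidate whose power-up behaviour disagrees with that of $g$ (as was done for class $20\text{B}$ versus $20\text{EF}$ in the proof of Theorem~\ref{thm:L2(11)}). To distinguish between the survivors, I would compute $\chi_\MM(g)$: if $g\in\GG$ this is immediate via \textcode{chi\_G\_x0}, and otherwise I would apply the conjugation technique described in Section~\ref{secNotation} --- locate a $2\text{B}$-involution $t \in C_\MM(g)$, use \textcode{conjugate\_involution} to obtain $c \in \MM$ with $t^c = z$, and then evaluate $\chi_\MM$ on the element $g^c \in \GG$.

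The step I expect to be the main obstacle is locating, for each ambiguous $g$, a $2\text{B}$-involution in $C_\MM(g)$ so that the conjugation trick applies. When $g$ has even order, a suitable $2$-power of $g$ often does the job, with its $\MM$-class checked via \textcode{conjugate\_involution}. When $g$ has odd order --- which occurs in $S$ for orders $5$, $11$, $15$, $33$, and $55$ --- the involution must come from elsewhere in the centraliser; a natural candidate is the central involution of the $2\mt{A}_5$-factor of $S$, which commutes with the $5$-factor and can be tested via \textcode{conjugate\_involution} to see whether it lies in class $2\text{B}$. A secondary concern is that the \textsf{PossibleClassFusions} output may contain several ambiguities involving $\MM$-classes on which $\chi_\MM$ takes a common value; should this happen for some $g$, I would fall back on inspecting power structure (many $S$-elements of composite order have powers whose $\MM$-classes are already determined by the possibilities list) rather than attempting to evaluate a further $\MM$-irreducible on $g$. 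Once every ambiguous class has been treated in this way, the fusion data recorded in Table~\ref{table2} follow.
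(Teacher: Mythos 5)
Your proposal follows essentially the same route as the paper: run \textsf{PossibleClassFusions} on the character tables of $S$ and $\MM$, then resolve the remaining ambiguities by evaluating $\chi_\MM$ on explicit representatives conjugated into $\GG$ via a commuting $2\text{B}$-involution, falling back on power-map data when the character value alone does not decide. In the event, the only ambiguities that need resolving concern elements of orders $30$ and $10$ (both even, so a $2$-power of the element itself supplies the required $2\text{B}$-involution), and the paper settles them exactly as you describe, using $x_{30}=x_5x_3x_4^2$ and $x_{10}=x_4(x_3x_5)^2$ together with the computation $\chi_\MM(x_{30}^3)=20$ to rule out $30\text{C}$.
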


\begin{proof}
Take $S$ to be the subgroup $\langle x_{11},y_{11},x_3,x_4,x_5 \rangle$ of $\MM$ constructed in Lemma~\ref{prop11^2}. 
We proceed as in the proof of Theorem~\ref{thm:L2(11)}, applying the \textsf{GAP} function \textsf{PossibleClassFusions} to simplify our calculations. 
The character table of $S$ is stored in \textsf{GAP} as \textsf{CharacterTable("11\^{}2:(5x2.A5)")}. 
There are seven possible class fusions. 
In each case, the elements of order $30$ in $S$ are seen to lie in a single $\MM$-class, although this is clear {\em a priori} because $S$ has a unique conjugacy class of cyclic subgroups of order $30$ and all $\MM$-classes of elements of order $30$ are rational, i.e. every $30 < \MM$ meets a single class of elements of order $30$. 
The seven possibilities are reduced to two upon showing that the elements of order $30$ in $S$ lie in the $\MM$-class $30\text{E}$. 
We find that $x_{30} = x_5x_3x_4^2$ has order $30$ and powers to $2\text{B}$, which allows us to calculate $\chi_\MM(x_{30}) = -1$. 
This implies that $x_{30}$ lies in $30\text{C}$ or $30\text{E}$. 
The former case is impossible according to the list of possible class fusions, so $x_{30} \in 30\text{E}$, but we also confirm this by calculating $\chi_\MM(x_{30}^3) = 20$, which implies that $x_{30}^3 \in 10\text{D}$ and hence $x_{30} \not \in 30\text{C}$, because $30\text{C}$-elements power to $10\text{A}$. 
The remaining two possible class fusions are distinguished by the existence or non-existence in $S$ of $10\text{E}$-elements. 
The element $x_{10} = x_4(x_3x_5)^2$ has order $10$ and powers to $2\text{B}$; we calculate $\chi_\MM(x_{10}) = 0$, which indicates that $x_{10} \in 10\text{E}$. 
The other data in Table~\ref{table2} are then obtained via \textsf{PossibleClassFusions}.
\end{proof}

\begin{table}[t]
\begin{tabular}{llll}
\hline
$2\text{a} \rightarrow 2\text{B}$ & $5\text{abcdefhjln} \rightarrow 5\text{B}$ & $10\text{abcdej} \rightarrow 10\text{D}$ & $15\text{abcd} \rightarrow 15\text{D}$ \\
$3\text{a} \rightarrow 3\text{C}$ & $5\text{gikm} \rightarrow 5\text{A}$ & $10\text{fhln} \rightarrow 10\text{E}$ & $20\text{abcd} \rightarrow 20\text{E}$ \\
$4\text{a} \rightarrow 4\text{D}$ & $6\text{a} \rightarrow 6\text{F}$ & $10\text{gikm} \rightarrow 10\text{B}$ & $30\text{abcd} \rightarrow 30\text{E}$ \\
\hline
\end{tabular}
\caption{Data for Theorem~\ref{thm:Sylow11}. Conjugacy class fusion from a maximal subgroup $S$ of $\MM$ of shape $11^2{:}(5 {\times} 2\mt{A}_5)$ to $\MM$. 
Notation is as in Table~\ref{table1}, with $S$-classes labelled as in \textsf{CharacterTable("11\^{}2:(5x2.A5)")} in \textsf{GAP}. 
Elements of orders $1$, $11$, and $55$ are omitted.
}
\label{table2}
\end{table}

%%%%%%%%%%%%%%%%%%%%%%%
%%%%%%%% SECTION 5 %%%%%%%%
%%%%%%%%%%%%%%%%%%%%%%%

\section{The maximal subgroup $7^2{:}\text{SL}_2(7)$} \label{sec7^2}

Recall that $\MM$ has exactly two conjugacy classes of elements of order $7$, denoted $7\text{A}$ and $7\text{B}$. 
Per \cite{Ho88} and \cite[Section~10]{W88}, a maximal subgroup of $\MM$ of shape $7^2{:}\text{SL}_2(7)$ is the normaliser of a certain type of $7\text{B}$-pure subgroup of shape $7^2$, ``pure'' meaning that the $7^2$ contains no $7\text{A}$-elements. 
If $x \in 7\text{B}$, then $N_\MM(\langle x \rangle)$ has shape $7^{1+4}{:}(3 {\times} 2\mt{S}_7)$. 
To extend $\langle x \rangle$ to a $7^2$ of the correct type, it suffices to adjoin an element $y$ of class $7\text{B}$ that commutes with $x$ and lies outside of the normal subgroup $7^{1+4}$ of $N_\MM(\langle x \rangle)$, such that all non-trivial elements of $\langle x,y \rangle \cong 7^2$ are in class $7\text{B}$. 

\begin{lemma} \label{prop7^2}
The elements $x_7,y_7 \in \MM$ in Listing~\ref{fig:sec7^2} generate a subgroup $A \cong 7^2$ of \hspace{0.01pt} $\MM$ such that $N_\MM(A)$ is a maximal subgroup of shape $7^2{:}\textnormal{SL}_2(7)$. 
The elements $x_4,x_{14} \in \MM$ in Listing~\ref{fig:sec7^2} extend $A$ to $N_\MM(A)$. 
\end{lemma}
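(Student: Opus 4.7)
The plan follows the criterion set out in the paragraph preceding the lemma. First, I would verify that $x_7$ and $y_7$ each have order $7$, commute, and are independent, so that $A = \langle x_7, y_7 \rangle$ is elementary abelian of order $49$. Next, for $7B$-purity, I would conjugate a generator of each of the eight cyclic subgroups of $A$ into $\GG$ (using \texttt{conjugate\_involution} on a $2B$-involution centralising $x_7$) and evaluate $\chi_\MM$ to read off the $7$-class; since $\chi_\MM$ distinguishes $7\mt{A}$ from $7\mt{B}$, this settles purity.

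For the complement, I would confirm that $x_4, x_{14}$ normalise $A$ by computing $x_7^{x_4}, y_7^{x_4}, x_7^{x_{14}}, y_7^{x_{14}}$ and verifying each lies in $A$; from the coordinates of these images in $\mathbb{F}_7^2$ I would extract the $2 \times 2$ matrices giving the action of $B = \langle x_4, x_{14} \rangle$ on $A$, and check that they have determinant $1$ and generate $\text{SL}_2(7)$ in its natural representation on $\mathbb{F}_7^2$. Independently, I would recognise $B$ abstractly as $\text{SL}_2(7)$ by verifying a standard two-generator presentation on $x_4, x_{14}$ and invoking Von Dyck's theorem. Since the natural $\text{SL}_2(7)$-action is irreducible and $49 \nmid 336 = |B|$, the intersection $A \cap B$ is trivial, giving $|AB| = 49 \cdot 336 = 16464$.

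The main obstacle is the ``type'' condition: verifying that $y_7$ does not lie in the normal $7^{1+4}$ of $N_\MM(\langle x_7 \rangle)$. Working with that $7^{1+4}$ directly is not easy in \texttt{mmgroup}; I would instead argue indirectly, using that \cite{Ho88, W88} identify the unique $\MM$-class of maximal subgroups of shape $7^2{:}\text{SL}_2(7)$ with the normaliser of the diagonal type of $7B$-pure $7^2$. Having exhibited a subgroup of shape $7^2{:}\text{SL}_2(7)$ whose $7^2$ is $7B$-pure and on which $\text{SL}_2(7)$ acts irreducibly in the natural way, I would conclude (combining with the classification of maximal subgroups of $\MM$ completed in \cite{ourpaper} to rule out containment in a larger overgroup) that this subgroup is conjugate to the unique maximal subgroup of shape $7^2{:}\text{SL}_2(7)$. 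Hence $A$ is of the diagonal type, $N_\MM(A)$ is the desired maximal subgroup, and the order check $|AB| = 16464 = |N_\MM(A)|$ yields $AB = N_\MM(A)$, proving both claims.
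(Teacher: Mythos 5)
Your handling of the complement is essentially the paper's: a two-generator presentation identifies $B=\langle x_4,x_{14}\rangle$ as $\text{SL}_2(7)$ (the paper additionally notes $|x_{14}|=14$ to exclude the quotient $\PSL_2(7)$, which Von Dyck alone does not rule out), and $A\cap B=1$ follows because a non-trivial intersection would be a proper non-zero $B$-invariant subgroup of $A$ (the paper phrases this via the central involution $x_4^2$ failing to centralise $x_7$). Your purity check is also workable in principle, though note that a single $2\text{B}$-involution centralising $x_7$ will not in general centralise the other seven generators, so you would need a separate conjugation into $\GG$ for each; the paper instead exhibits explicit elements of $B$ conjugating $x_7$ to a generator of each of the other cyclic subgroups of $A$, which settles purity in one stroke since $7\text{B}$ is rational.

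The genuine gap is precisely at the step you flag as the ``main obstacle''. Exhibiting a $7\text{B}$-pure $A\cong 7^2$ on which an $\text{SL}_2(7)$ inside $N_\MM(A)$ acts irreducibly does \emph{not} pin down the $\MM$-class of $A$: by \cite{W88}, the normal $7^2$ of the maximal subgroup $7^{2+1+2}{:}\GL_2(7)$ is also $7\text{B}$-pure and also admits an irreducible $\text{SL}_2(7)$-action inside its normaliser, but that normaliser has order $7^5\cdot 2016$, not $16464$. So your data are consistent with $AB$ being a proper subgroup of $N_\MM(A)$, and the closing order count $|AB|=|N_\MM(A)|$ is unjustified. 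The classification of maximal subgroups cannot repair this: knowing which maximal subgroups could contain $AB$ says nothing about whether $N_\MM(A)=AB$. The paper closes the gap by doing the computation you hoped to avoid: it constructs the normal $7^{1+4}$ of $N_\MM(\langle x_7\rangle)$ explicitly as $Q=\langle x_7,a_7,b_7,c_7,d_7\rangle$, verifies $|Q|=7^5$ with $Q/\langle x_7\rangle$ elementary abelian, checks (via \textsf{AtlasGroup("7\^{}(1+4):(3x2.S7)")} in \textsf{GAP}) that these properties characterise $Q$ inside $N_\MM(\langle x_7\rangle)$, and then certifies $y_7\notin Q$ by showing $[y_7,a_7]\notin\langle x_7\rangle$ --- which is exactly the criterion of \cite{Ho88,W88} for $A$ to be of the type whose normaliser is the maximal $7^2{:}\text{SL}_2(7)$. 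Without this step, or some substitute invariant separating the two types (for instance showing $C_\MM(A)=A$), the first assertion of the lemma is not established.
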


\begin{proof}
We check that $x_7$ has order $7$ and lies in $\GG$, and calculate $\chi_\MM(x_7) = 1$ to confirm that $x_7 \in 7\text{B}$. 
Next we claim that the elements $a_7$, $b_7$, $c_7$, and $d_7$ in Listing~\ref{fig:sec7^2} extend $\langle x_7 \rangle$ to the normal subgroup $7^{1+4}$ of $N_\MM(\langle x_7 \rangle)$. 
Let $Q = \langle x_7,a_7,b_7,c_7,d_7 \rangle$. 
We check that $a_7$, $b_7$, $c_7$, and $d_7$ centralise $x_7$, so that $Q \leq N_\MM(\langle x_7 \rangle)$, and that they commute pairwise modulo $\langle x_7 \rangle$, so that $Q / \langle x_7 \rangle$ is elementary abelian. 
To confirm that $Q$ is the whole normal $7^{1+4} < N_\MM(\langle x_7 \rangle)$, it then suffices to show that $|Q|=7^5$, because $N_\MM(\langle x_7 \rangle)$ has a unique subgroup of order $7^5$ whose quotient by $\langle x_7 \rangle$ is elementary abelian. 
The latter claim can be verified by constructing a copy of $N_\MM(\langle x_7 \rangle)$ in \textsf{GAP} as \textsf{AtlasGroup("7\^{}(1+4):(3x2.S7)")}, using the \textsf{AtlasRep} package \cite{atlas-web}. 
We check that $a_7$ and $b_7$ commute and are not powers of each other, and that $x_7 \not \in \langle a_7,b_7 \rangle$, so that $\langle x_7,a_7,b_7 \rangle$ is elementary abelian of order $7^3$. 
The elements $c_7$ and $d_7$ also commute, and we check that $\langle c_7,d_7 \rangle$ intersects $\langle x_7,a_7,b_7 \rangle$ trivially, so $|Q|=7^5$. 
The element $y_7$ has order $7$ and centralises $x_7$. 
To confirm that $y_7$ does not lie in $Q$, we check that it does not commute with $a_7$ modulo $\langle x_7 \rangle$. 
To complete the proof of the first assertion, it then remains to show that $A = \langle x_7,y_7 \rangle$ is $7\text{B}$-pure. 
Given that $7\text{B}$ is a rational class, it suffices to check that one element of each of the seven cyclic subgroups of $A$ other than $\langle x_7 \rangle$ lies in $7\text{B}$. 
These subgroups are generated by $y_7$ and $y_7^ix_7$ for $1 \leq i \leq 6$, and we find that $x_7$ is conjugate to each of these generators via the following elements of $B = \langle x_4,x_{14} \rangle$, respectively: $(x_{14}x_4x_{14})^2$,\, $(x_{14}x_4x_{14})^2x_{14}^2x_4$,\, $x_{14}x_4x_{14}^4x_4x_{14}$,\, $x_{14}x_4x_{14}^3$,\, $x_4x_{14}(x_{14}x_4)^2$,\, $x_{14}x_4x_{14}^3x_4$,\, and $(x_{14}x_4x_{14})^2x_{14}^3$.

To prove the second assertion, we first check that $B$ normalises $A$, and that $a=x_4$ and $b=x_{14}$ satisfy the presentation
$
\langle a,b \mid (ab)^3a^{-2} = (ab^4ab^4)^2b^7a^4 = 1 \rangle
$
for $\text{SL}_2(7)$ given in \cite[Theorem~4]{CRpres}, noting also that $|x_4|=4$ and $|x_{14}|=14$. 
Given that $\PSL_2(7)$ has no elements of order~$14$, it follows from Von Dyck's Theorem that $B \cong \text{SL}_2(7)$. 
To confirm that $AB = N_\MM(A)$, it remains to show that $B$ intersects $A$ trivially. 
If not, then the kernel of the action of $B$ on $A$ by conjugation must be non-trivial because $A$ is abelian, and so the central involution in $B$ must centralise $A$. 
A final calculation shows, however, that the central involution in $B$ is $x_4^2$, and that $x_4^2$ does not centralise $x_7$. 
\end{proof}

\begin{theorem} \label{thm:7B2}
A maximal subgroup $7^2{:}\textnormal{SL}_2(7)$ of \hspace{0.01pt} $\MM$ intersects precisely the following conjugacy classes of \hspace{0.01pt} $\MM$ non-trivially: $2\textnormal{B}$, $3\textnormal{C}$, $4\textnormal{D}$, $6\textnormal{F}$, $7\textnormal{B}$, $8\textnormal{F}$, and $14\textnormal{C}$. 
\end{theorem}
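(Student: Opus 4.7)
The plan is to follow the same strategy as in the proofs of Theorems~\ref{thm:L2(11)} and~\ref{thm:Sylow11}: combine abstract character-theoretic information about $S$ with character-value computations in \textcode{mmgroup}. Since the theorem asks only for the set of $\MM$-classes met by $S$, it suffices to identify, for each $S$-class, the $\MM$-class containing it. First I would construct $S$ abstractly in \textsf{GAP} (for example as a small permutation group arising from the action of $B = \langle x_4,x_{14} \rangle \cong \text{SL}_2(7)$ on $A = \langle x_7,y_7 \rangle \cong 7^2$), compute its character table, and apply \textsf{PossibleClassFusions} to the character tables of $S$ and $\MM$ to shortlist the candidate fusions. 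The element orders occurring in $S$ are contained in $\{1,2,3,4,6,7,8,14\}$, and by Lemma~\ref{prop7^2} every non-identity element of $A$ already lies in class $7\text{B}$, which eliminates many candidates at the outset.

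Second, I would settle the unique involution class of $S$. Since $A$ has no involutions and $\text{SL}_2(7)$ contains only the central involution $x_4^2$, every involution of $S$ is conjugate to $x_4^2$. Applying \textcode{conjugate\_involution} to $x_4^2$ returns both its $\MM$-class (expected $2\text{B}$) and a conjugator $c \in \MM$ with $(x_4^2)^c = z$. Because $x_4^2$ is central in $B$, the conjugate $B^c$ lies in $\GG$, so $\chi_\MM$ may be evaluated on every element of $B$ by first conjugating by $c$; this identifies the $\MM$-class of each $S$-class represented in $B$, giving the expected values $3\text{C}, 4\text{D}, 6\text{F}, 7\text{B}, 8\text{F}, 14\text{C}$ for orders $3, 4, 6, 7, 8, 14$ respectively.

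It remains to handle $S$-classes of elements with non-trivial components in both $A$ and $B$. For any such element $g$ of even order, the power $g^k$ of order $2$ is conjugate to $x_4^2$, so \textcode{conjugate\_involution}$(g^k)$ supplies a conjugator $c' \in \MM$ with $(g^k)^{c'} = z$; then $g^{c'}$ commutes with $z$ (since $g$ commutes with $g^k$) and therefore lies in $\GG$, allowing $\chi_\MM(g)$ to be computed. The main obstacle is handling odd-order elements outside $A$, particularly the order-$7$ elements in the non-abelian Sylow $7$-subgroup of $S$ of order $7^3$ that lie outside $A$: for these I would search for a commuting involution in $S$ (for instance a conjugate of $x_4^2$) and apply the same conjugation trick, or alternatively pass through an order-$14$ element whose square is the given order-$7$ element. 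Once every $S$-class has been identified, the claim that $S$ meets exactly the seven listed $\MM$-classes follows from the shortlist returned by \textsf{PossibleClassFusions}.
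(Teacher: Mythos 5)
Your overall strategy matches the paper's up to the point you yourself flag as ``the main obstacle'': the six $S$-classes of order-$7$ elements whose centralisers have order $7^2$ (labelled $7\text{cdeghi}$ in \textsf{CharacterTable("7\^{}2:2psl(2,7)")}). Everything before that is sound and close to what the paper does: all elements of order coprime to $7$, and in fact all elements of order $14$, are conjugate into $B=\langle x_4,x_{14}\rangle$ (since $C_S(x_4^2)=B$), so conjugating $B$ into $\GG$ via the conjugator returned for $x_4^2$ and evaluating $\chi_\MM$ there settles those classes, and $A$ is $7\text{B}$-pure by Lemma~\ref{prop7^2}. But both of your proposed remedies for the remaining order-$7$ classes fail for structural reasons. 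First, an element $g$ in one of these classes has $|C_S(g)|=7^2$, so there is \emph{no} involution in $S$ commuting with $g$; a commuting $2\text{B}$-involution does exist in $C_\MM(g)$, but you have no feasible way to find one with the basic \textcode{mmgroup} functionality available (random search in $\MM$ for centralising elements is hopeless, and constructing $C_\MM(g)$ is exactly the kind of thing \textcode{mmgroup} cannot do directly). Second, these six classes are precisely the order-$7$ classes of $S$ that are \emph{not} squares of order-$14$ elements of $S$ --- those squares form the classes $7\text{b}$ and $7\text{f}$, which your computation on $B$ already covers --- so ``passing through an order-$14$ element'' is not available either.

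The paper closes this gap by a different device: it verifies in \textsf{GAP} that, for suitable $z_7\in A$ and $z_{14}\in S$ of order $14$ not normalising $\langle z_7\rangle$, the elements $z_7z_{14}^2$, $z_7^2z_{14}^2$, $z_7^3z_{14}^2$ and their cubes represent all six classes, and then exhibits explicit elements $r,s,t\in\MM$ (Listing~\ref{fig:sec7^2}) conjugating these three elements to $x_7\in 7\text{B}$; rationality of $7\text{B}$ finishes the argument. This sidesteps any character evaluation for these elements, at the cost of having to produce the conjugators $r,s,t$ --- an ingredient your proposal does not supply and whose existence proof is the actual content of this step. As written, your argument therefore does not establish the fusion of the classes $7\text{cdeghi}$, and hence does not rule out the possibility that $S$ meets $7\text{A}$.
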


\begin{proof}
Consider the subgroup $S = \langle x_7,y_7,x_4,x_{14} \rangle$ of $\MM$ constructed in Lemma~\ref{prop7^2}. 
We claim that the unique $S$-classes of elements of orders $4$ and $6$ fuse to the $\MM$-classes $4\text{D}$ and $6\text{F}$, and that all elements of order $7$ in $S$ lie in $7\text{B}$. 
The theorem then follows from the power maps in the character table of $\MM$. 
(Elements in $4\text{C}$ and $6\text{F}$ square to $2\text{B}$ and $3\text{C}$, respectively, so the unique $S$-classes of elements of orders $2$ and $3$ fuse to $2\text{B}$ and $3\text{C}$; the only elements of order $8$ that square to $4\text{C}$ are those in $8\text{F}$; and the only elements of order $14$ that square to $7\text{B}$ are those in $14\text{C}$.) 
The element $c$ in Listing~\ref{fig:sec7^2} conjugates $x_4$ and $x_{14}$ into $\GG$, so we can calculate $\chi_\MM(g)$ for all $g \in \langle x_4,x_{14} \rangle$. 
We find that $\chi_\MM(x_4)=-13$, which indicates that $x_4 \in 4\text{D}$, and that $x_6 = x_4x_{14}$ has order $6$ and $\chi_\MM$-value $-1$, which indicates that $x_6 \in 6\text{F}$. 
There are nine $S$-classes of elements of order $7$. 
Let us label them as in \textsf{CharacterTable("7\^{}2:2psl(2,7)")} in \textsf{GAP}, using lowercase letters.
The non-trivial elements in the normal subgroup $A = \langle x_7,y_7 \rangle$ of $S$ comprise the class $7\text{a}$, which is already known to fuse to $7\text{B}$. 
The classes $7\text{b}$ and $7\text{f}$ consist of the squares of elements of order $14$. 
The cube of a $7\text{b}$-element is a $7\text{f}$-element, and $7\text{B}$ is a rational class, so it suffices to check that $x_{14}$ squares to a $7\text{B}$-element, which we do by calculating $\chi_\MM(x_{14}^2) = 1$. 
The remaining six $S$-classes of elements of order $7$ (labelled $7\text{cdeghi}$) are precisely the $S$-classes whose elements have centralisers of order $7^2$. 
Denote the union of these classes by $U$. 
Upon constructing a copy of $S$ in \textsf{GAP} as \textsf{AtlasGroup("7\^{}2:2psl(2,7)")}, it is easy to check that for every non-trivial $z_7 \in A$ and every $z_{14} \in S$ of order $14$ that does not normalise $\langle z_7 \rangle$, the elements $z_7z_{14}^2$, $z_7^2z_{14}^2$, and $z_7^3z_{14}^2$ and their third powers form a full set of representatives of the $S$-classes in $U$. 
Given that $7\text{B}$ is rational, it therefore suffices to show that $z_7z_{14}^2, z_7^2z_{14}^2, z_7^3z_{14}^2 \in 7\text{B}$ for some choice of $z_7$ and $z_{14}$. 
We choose $z_7 = x_7$ and $z_{14} = x_{14}^{x_4}$, verifying first that $z_{14}$ does not normalise $\langle x_7 \rangle$. 
We then check that the elements $r$, $s$, and $t$ in Listing~\ref{fig:sec7^2} conjugate $z_7z_{14}^2$, $z_7^2z_{14}^2$, and $z_7^3z_{14}^2$, respectively, to $x_7 \in 7\text{B}$. 
\end{proof}

%%%%%%%%%%%%%%%%%%%%%%%
%%%%%%%% SECTION 6 %%%%%%%%
%%%%%%%%%%%%%%%%%%%%%%%

\section{The maximal subgroup $\PSL_2(19){:}2$} \label{secL219}

To handle this case, we first recall from \cite[Table~3]{N98} that $\MM$ has exactly two conjugacy classes of subgroups isomorphic to $\mt{A}_5$ whose non-trivial elements lie in the $\MM$-classes $2\text{B}$, $3\text{B}$, and $5\text{B}$. 
Such a subgroup $A$ is said to be of ``type B'' or ``type T'' according to whether $C_\MM(A)$ is cyclic of order $2$ or isomorphic to $\mt{S}_3$. 
(The nomenclature is explained in \cite{N98}.) 
By \cite[Theorem~20]{NW02}, in every subgroup $S \cong \PSL_2(19){:}2$ of $\MM$, the elements of orders $2$, $3$, and $5$ must belong to the $\MM$-classes $2\text{B}$, $3\text{B}$, and $5\text{B}$. 
In particular, the subgroups of $S$ isomorphic to $\mt{A}_5$, which comprise a single $S$-class, must all be of a single type, B or T. 
By \cite[Theorem~1]{HW08}, there are exactly two classes of $\PSL_2(19){:}2 < \MM$, distinguished by the type of their $\mt{A}_5$ subgroups, and only those with $\mt{A}_5$ subgroups of type B are maximal.  

\begin{lemma} \label{propL219}
The elements $x_2,x_{19} \in \MM$ in Listing~\ref{fig:secL219} generate a maximal  $\PSL_2(19){:}2 < \MM$. 
In particular, $g_2 = (x_{19}^2x_2)^2$ and $g_3 = x_2x_{19}^2x_2x_{19}$ generate a subgroup of \hspace{0.01pt} $\MM$ that is isomorphic to $\mt{A}_5$ and of type B.
\end{lemma}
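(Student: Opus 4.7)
The plan is to establish three things in sequence: first, that $P := \langle x_2, x_{19}\rangle \cong \PSL_2(19){:}2$; second, that $A := \langle g_2, g_3\rangle \cong \mt{A}_5$; and third, that $A$ is of type B. Maximality of $P$ in $\MM$ then follows immediately from \cite[Theorem~1]{HW08}, which says that of the two $\MM$-classes of $\PSL_2(19){:}2$, only those with type-B $\mt{A}_5$ subgroups are maximal.

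For the first step, I would verify in \textcode{mmgroup} that $|x_2|=2$, $|x_{19}|=19$, and that $x_2, x_{19}$ satisfy a known 2-generator presentation of $\PSL_2(19){:}2$. Von Dyck's theorem then gives that $P$ is a quotient of $\PSL_2(19){:}2$. Since $\PSL_2(19)$ is simple and of index $2$, every proper quotient of $\PSL_2(19){:}2$ has order at most $2$; as $P$ contains an element of order $19$, this forces $P \cong \PSL_2(19){:}2$. For the second step, I would check that $g_2$, $g_3$, and $g_2 g_3$ have orders $2$, $3$, and $5$ respectively, so that they satisfy the standard presentation $\langle a, b \mid a^2 = b^3 = (ab)^5 = 1\rangle$ of $\mt{A}_5$; as $\mt{A}_5$ is simple and $g_2 \neq 1$, Von Dyck's theorem yields $A \cong \mt{A}_5$.

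The main obstacle is the third step. I would first confirm via $\chi_\MM$ that $g_2$, $g_3$, and $g_2 g_3$ lie in the $\MM$-classes $2\mt{B}$, $3\mt{B}$, and $5\mt{B}$, which is required for both type B and type T. Next I would exhibit an explicit involution $z \in \MM$ that centralises both $g_2$ and $g_3$; a natural candidate is a product of $x_2$ with a suitable power of $x_{19}$, or some similar element derived from the outer $C_2$ of $P$, that happens to commute with $A$. This shows $|C_\MM(A)| \geq 2$. To rule out type T (centraliser $\mt{S}_3$), I would fingerprint $C_\MM(A)$ via $\chi_\MM$ values on elements of $\langle A, z\rangle$: for instance, computing $\chi_\MM(z)$ and $\chi_\MM(g_3 z)$ and comparing against the $\MM$-classes predicted in each scenario. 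The difficulty is precisely that \textcode{mmgroup} does not permit direct construction of centralisers, so the type of $A$ must be determined indirectly from character values, together with the structural information from \cite{N98} and \cite{HW08}.
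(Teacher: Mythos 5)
Your first two steps (identifying $\langle x_2,x_{19}\rangle$ via a presentation of $\PGL_2(19)$ plus the quotient argument, and identifying $A=\langle g_2,g_3\rangle\cong\mt{A}_5$ via the standard presentation, then confirming the classes $2\text{B}$, $3\text{B}$, $5\text{B}$ by character values) coincide with the paper's proof, and the reduction of maximality to exhibiting a type-B $\mt{A}_5$ via \cite[Theorem~1]{HW08} is also the same. The gap is in the third step, the type determination. Exhibiting an involution $z$ centralising $A$ only gives the lower bound $|C_\MM(A)|\geq 2$, which holds in both the type-B case ($C_\MM(A)\cong 2$) and the type-T case ($C_\MM(A)\cong\mt{S}_3$); to rule out type T you must show that \emph{no} element of order $3$ centralises $A$, and that is a non-existence statement that cannot be established by computing $\chi_\MM$ on elements you happen to have constructed. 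Your proposed ``fingerprint'' of $C_\MM(A)$ via $\chi_\MM(z)$ and $\chi_\MM(g_3z)$ is not an argument as it stands: you would need to know, a priori, that the $\MM$-class of the central involution of a type-B centraliser differs from that of the involutions of a type-T centraliser, and you neither establish this nor cite a source for it. Since \textcode{mmgroup} gives no direct access to centralisers, this route stalls exactly where you acknowledge the difficulty.

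The paper sidesteps the centraliser entirely and works with the normaliser instead, using the structural data from \cite[Section~4]{N98}: a type-T $\mt{A}_5$ has normaliser $\mt{S}_3\times\mt{S}_5$, while a type-B $\mt{A}_5$ has normaliser $\mt{A}_5.4$. It then exhibits an explicit element $y_4$ of order $4$ that normalises $A$, does not centralise $A$, and whose square does centralise $A$; this forces $\langle A,y_4\rangle$ to have shape $\mt{A}_5.4$, and since $\mt{S}_3\times\mt{S}_5$ contains no such subgroup, $A$ must be of type B. The lesson is that a \emph{positive} exhibition of enough of $N_\MM(A)$ to be incompatible with the type-T normaliser is achievable with the available tools, whereas bounding $C_\MM(A)$ from above is not. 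To repair your proof you would need to replace the fingerprinting step with an argument of this kind (or otherwise supply and justify the class-theoretic data your fingerprint relies on).
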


\begin{proof}
Let $S = \langle x_2,x_{19} \rangle$. 
A short calculation shows that $a=x_2$ and $b=x_{19}$ satisfy the presentation
$
\langle a,b \mid a^2 = b^{19} = (ab^2)^4 = (abab^2)^3 = 1 \rangle
$
for $\PSL_2(19){:}2 = \text{PGL}_2(19)$ given in \cite[Corollary~5]{pres}. 
Because the only non-trivial quotients of $\PSL_2(19){:}2$ are the group itself and a group of order $2$, this confirms that $S \cong \PSL_2(19){:}2$. 
To show that $S$ is maximal in $\MM$, it suffices to exhibit a type-B $\mt{A}_5<S$. 
The elements $g_2 = (x_{19}^2x_2)^2$ and $g_3 = x_2x_{19}^2x_2x_{19}$ coincide with the elements $g_2$ and $g_3$ in the ``type~B'' case of \cite[Listing~8]{ourpaper}. 
It is shown in \cite[Proposition~4.2]{ourpaper} that $A = \langle g_2,g_3 \rangle$ is isomorphic to $\mt{A}_5$ and of type~B, but we also give an alternative proof that is significantly shorter and easier to reproduce. 

The elements $a=g_2$ and $b=g_3$ satisfy the presentation $\langle a,b \mid a^2=b^3=(ab)^5=1 \rangle$ for $\mt{A}_5$, so $A \cong \mt{A}_5$. 
The method \textcode{conjugate\_involution} shows that $g_2 \in 2\text{B}$;  the elements $c$ and $d$ in Listing~\ref{fig:secL219} conjugate $g_3$ and $g_5 = g_2g_3$, respectively, into $\GG$, whence \textcode{chi\_G\_x0} shows that $\chi_\MM(g_3) = 53$ and $\chi_\MM(g_5) = 8$, so $g_3 \in 3\text{B}$ and $g_5 \in 5\text{B}$. 
Per \cite[Section~4]{N98}, a type-T $\mt{A}_5<\MM$ has normaliser $\mt{S}_3 {\times} \mt{S}_5$, and a type-B $\mt{A}_5<\MM$ has normaliser $\mt{A}_5.4$. 
The element $y_4$ in Listing~\ref{fig:secL219} has order $4$ and normalises but does not centralise $A$, and $y_4^2$ centralises $A$, so $y_4$ extends $A$ to a group of shape $\mt{A}_5.4$. 
Given that $\mt{S}_3 {\times} \mt{S}_5$ has no such subgroup, it follows that $\langle A,y_4 \rangle = N_\MM(A)$, so $A$ is of type~B.
\end{proof}

\begin{theorem} \label{thm:PGL2(19)}
A maximal subgroup $\PSL_2(19){:}2$ of \hspace{0.01pt} $\MM$ intersects precisely the following conjugacy classes of \hspace{0.01pt} $\MM$ non-trivially: $2\textnormal{B}$, $3\textnormal{B}$, $4\textnormal{C}$, $5\textnormal{B}$, $6\textnormal{E}$, $9\textnormal{B}$, $10\textnormal{E}$, $18\textnormal{E}$, $19\textnormal{A}$, and $20\textnormal{F}$. 
\end{theorem}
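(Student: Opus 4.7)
The plan is to proceed as in Theorems \ref{thm:L2(11)} and \ref{thm:Sylow11}, using the subgroup $S=\langle x_2,x_{19}\rangle\cong\PSL_2(19){:}2$ from Lemma~\ref{propL219}. The element orders occurring in $\PGL_2(19)$ are $1,2,3,4,5,6,9,10,18,19,20$. Three cases are immediate: Lemma~\ref{propL219}, via the type-B $\mt{A}_5$ subgroup $\langle g_2,g_3\rangle$ and \cite[Theorem~20]{NW02}, forces every element of $S$ of order $2$, $3$, or $5$ to lie in $2\text{B}$, $3\text{B}$, or $5\text{B}$, respectively; and $\MM$ has a unique class of order-$19$ elements, namely $19\text{A}$.

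For each remaining order $n\in\{4,6,9,10,18,20\}$, I would exhibit an explicit element $g_n\in S$ of order $n$ as a short word in $x_2$ and $x_{19}$, and pin down its $\MM$-class as follows. The power maps in the character table of $\MM$, together with the already-determined classes of orders $2$, $3$, $5$, severely restrict the possibilities: for instance, $g_4$ squares to a $2\text{B}$-element so $g_4\in 4\text{C}\cup 4\text{D}$; $g_9$ cubes to $3\text{B}$, so $g_9\in 9\text{A}\cup 9\text{B}$; an order-$6$ element must cube to $2\text{B}$ and square to $3\text{B}$, narrowing $g_6$ to a short list; and similarly for $g_{10}$, $g_{18}$, $g_{20}$. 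In each case I would conjugate $g_n$ into the distinguished $2\text{B}$-centraliser $\GG$ by applying \textcode{conjugate\_involution} to a suitable involution (typically a power of $g_n$, or $g_2$ after an auxiliary conjugation), then read off $\chi_\MM(g_n)$ via \textcode{chi\_G\_x0} and match the value against the character table of $\MM$ to isolate the class stated in the theorem.

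To complete the argument, I would verify that every $S$-class of each given order maps to the single listed $\MM$-class. This can be done efficiently by feeding the representative classes of $g_n$ determined above into the \textsf{GAP} function \textsf{PossibleClassFusions} applied to the character tables of $S$ and $\MM$, checking that a unique fusion remains whose image is exactly $\{2\text{B},3\text{B},4\text{C},5\text{B},6\text{E},9\text{B},10\text{E},18\text{E},19\text{A},20\text{F}\}$. Rationality of the relevant $\MM$-classes and of the $\PGL_2(19)$ character table should eliminate any Galois ambiguities.

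The main obstacle will be the larger composite orders, especially $18$ and $20$: here the number of $\MM$-classes consistent with the power-map constraints is largest, and conjugating $g_{18}$ or $g_{20}$ into $\GG$ is not automatic, so explicit conjugating witnesses (analogous to $c$ and $d$ in Listing~\ref{fig:secL219}) will need to be found and recorded, and one may have to compute $\chi_\MM$ on several powers of $g_n$ simultaneously in order to pick out $18\text{E}$ (as opposed to $18\text{A}$ or $18\text{B}$) and $20\text{F}$ (as opposed to $20\text{B}$ or $20\text{E}$).
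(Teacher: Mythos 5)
Your proposal is correct and rests on the same core technique as the paper: identify explicit elements of $S=\langle x_2,x_{19}\rangle$, conjugate them into the distinguished $2\text{B}$-centraliser $\GG$ via \textcode{conjugate\_involution} applied to a suitable power, read off $\chi_\MM$, and use power maps together with rationality of the relevant $\MM$-classes and the uniqueness of the $S$-classes of cyclic subgroups of each order. The one organisational point you miss is that the case analysis collapses: every element of $\PGL_2(19)$ of order different from $19$ lies in a (cyclic) maximal torus of order $18$ or $20$, hence is a power of an element of order $18$ or $20$. So the paper only needs to show that $x_{18}=x_2x_{19}^3\in 18\text{E}$ and $x_{20}=x_2x_{19}\in 20\text{F}$ (both power to $2\text{B}$, so $\chi_\MM(x_{18})=5$ and $\chi_\MM(x_{20})=4$ can be computed directly); the classes $9\text{B}$, $6\text{E}$, $3\text{B}$, $2\text{B}$, $10\text{E}$, $5\text{B}$, $4\text{C}$ are then forced by the power maps of $18\text{E}$ and $20\text{F}$ in the character table of $\MM$, with $19\text{A}$ unique. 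This removes the need for your separate computations at orders $4$, $6$, $9$, $10$ and for the final \textsf{PossibleClassFusions} check, and it sidesteps your worry about narrowing down candidates for $g_4$, $g_6$, etc.\ from power-map constraints alone. Your version would still succeed, just with more witnesses to record and verify.
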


\begin{proof}
Consider the subgroup $S = \langle x_2,x_{19} \rangle$ of $\MM$ constructed in Lemma~\ref{propL219}. 
Note that the character table of $S$ is stored in \textsf{GAP} as \textsf{CharacterTable("L2(19).2")}. 
We claim that all elements of orders $18$ and $20$ in $S$ lie in the $\MM$-classes $18\text{E}$ and $20\text{F}$, respectively. 
The theorem then follows from the power maps in the character tables of $S$ and $\MM$.
(There is a unique $\MM$-class of elements of order $19$; every element of $S$ of order not equal to $19$ is a power of an element of order $18$ or $20$; elements in $18\text{E}$ power to $9\text{B}$, $6\text{E}$, $3\text{B}$, and $2\text{B}$; and elements in $20\text{F}$ power to $10\text{E}$, $5\text{B}$, $4\text{C}$, and $2\text{B}$.)
Given that $S$ has unique conjugacy classes of cyclic subgroups of orders $18$ and $20$, and that all $\MM$-classes of elements of these orders are rational, it suffices to exhibit one element of each of the classes $18\text{E}$ and $20\text{F}$ in $S$. 
We find that $x_{18} = x_2x_{19}^3$ and $x_{20} = x_2x_{19}$ have orders $18$ and $20$, respectively, and that both power to $2\text{B}$, so we can calculate $\chi_\MM(x_{18}) = 5$ and $\chi_\MM(x_{20}) = 4$, which indicates that $x_{18} \in 18\text{E}$ and $x_{20} \in 20\text{F}$. 
\end{proof}

%%%%%%%%%%%%%%%%%%%%%%%
%%%%%%%% REFERENCES %%%%%%%
%%%%%%%%%%%%%%%%%%%%%%%

%%%%%%%%%%%%%%%%%%%%%%%
%%%%%%%% APPENDIX %%%%%%%%%
%%%%%%%%%%%%%%%%%%%%%%%

\appendix
\section{Code listings} \label{app:listings}

The code listings referred to throughout the paper are collected here. 
As explained in the \textcode{mmgroup} documentation~\cite{sey_python_docs}, each string of the form \textcode{"M<...>"} uniquely defines an element of $\MM$ in terms of certain natural generators. 
The elements given here are also included in \cite{ourfile_new}.

%%%%%%%%%%%
%%% Listing 1 %%%
%%%%%%%%%%%
\vspace{10pt}
{\footnotesize
%float=t,
\begin{lstlisting}[breaklines=true,language=python,captionpos=b,texcl=false,frame=lines,caption={
Generators for the maximal subgroup $(\PSL_2(11) {\times} \PSL_2(11)){:}4$ of $\MM$ discussed in Section~\ref{secL211}, in \textcode{mmgroup} format; see also \cite{ourfile_new}.\\}, label=fig:secL211]  
x2 = MM("M<y_3dch*x_57fh*d_11ch*p_164842291*l_1*p_2640000*l_1*p_935210>")

x11 = MM("M<y_4bdh*x_120h*d_52ch*p_87984372*l_1*p_3840*l_1*p_21360*l_1*p_135360>")

x4 = MM("M<y_4a9h*x_898h*d_1ach*p_74531712*l_2*p_1900800*l_2*p_12614870*l_2*t_2*l_2*p_2344320*l_2*p_31997157*l_1*t_1*l_1*p_2880*l_2*p_21312*l_2*p_10252800*t_1*l_2*p_1900800*l_2*p_932277*t_1*l_1*p_1499520*l_2*p_64121894*t_1*l_2*p_2597760*l_1*p_42706968*t_2*l_2*p_2956800*l_1*p_42667409>")
\end{lstlisting}}

%%%%%%%%%%%
%%% Listing 2 %%%
%%%%%%%%%%%
{\footnotesize
\begin{lstlisting}[breaklines=true,language=python,captionpos=b,texcl=false,frame=lines,caption={
Generators for the maximal subgroup $11^2{:}(5{\times}2\mt{A}_5)$ of $\MM$ discussed in Section~\ref{sec11^2}; see also \cite{ourfile_new}. (Note that $x_{11}$ is the same as in Listing \ref{fig:secL211}, but $x_4$ is not.) \\}, label=fig:sec11^2]  
x11 = MM("M<y_4bdh*x_120h*d_52ch*p_87984372*l_1*p_3840*l_1*p_21360*l_1*p_135360>")

y11 = MM("M<y_389h*x_0d8dh*d_0d9ch*p_150523146*l_1*p_2640000*l_1*p_10668793*t_1*l_2*p_2597760*l_1*p_21348617*t_1*l_1*p_2832000*t_1*l_1*p_1499520*l_2*p_63997801*t_1*l_1*p_1499520*l_2*p_1527956>")

x3 = MM("M<y_479h*x_474h*d_0ad8h*p_170818001*l_1*p_2999040*l_1*p_32071331*l_1*t_1*l_2*p_1920*l_1*p_1394256*l_1*t_2*l_2*p_1900800*l_2*p_21819090*l_1*t_1*l_2*p_2386560*l_2*p_10777289*t_1*l_1*p_1499520*l_2*p_42755921*t_1*l_2*p_2386560*l_2*p_42799217*t_2*l_2*p_1985280*l_1*p_53381138>")

x4 = MM("M<y_3d7h*x_1c77h*d_206h*p_198338203*l_1*p_2027520*l_1*p_22753408*l_2*t_1*l_2*p_467520*l_2*p_22260720*l_2*t_1*l_1*p_1415040*l_1*p_10666848*l_1*p_10539840*t_1*l_1*p_1933440*l_1*t_2*l_2*p_46168320*l_2*t_1*l_1*p_2999040*l_1*p_16357*t_1*l_2*p_2597760*l_1*p_86256657*t_1*l_2*p_2386560*l_2*p_21424635>")

x5 = MM("M<y_47ah*x_1f1eh*d_482h*p_238402577*l_2*p_1900800*l_2*p_33456466*t_2*l_1*p_1415040*l_1*p_10793664*t_2*l_2*p_1943040*l_2*p_42669319*t_2*l_2*p_2956800*l_1*p_85409964*t_2*l_1*p_2027520*l_1*p_96458484*l_2*p_11130240*t_1*l_2*p_1920*l_1*p_1296*l_1*p_4652160>")
\end{lstlisting}}

%%%%%%%%%%%
%%% Listing 3 %%%
%%%%%%%%%%%
{\footnotesize
\begin{lstlisting}[breaklines=true,language=python,captionpos=b,texcl=false,frame=lines,caption={
Generators $x_7$, $y_7$, $x_4$, and $x_{14}$ for the maximal subgroup $7^2{:}\text{SL}_2(7)$ of $\MM$ discussed in Section~\ref{sec7^2}; see also \cite{ourfile_new}. The roles of the other elements are explained in Section~\ref{sec7^2}. (Note that $x_4$ is distinct from the elements $x_4$ in Listings~\ref{fig:secL211} and \ref{fig:sec11^2}.) \\}, label=fig:sec7^2]  
x7 = MM("M<y_5d3h*x_0a6dh*d_8d4h*p_111142481*l_1*p_2999040*l_1*p_43234193>")

y7 = MM("M<y_4a9h*x_1744h*d_0c88h*p_124439088*l_2*p_2597760*l_1*p_10860102*t_1*l_2*p_2386560*l_2*p_10772578*t_1*l_2*p_2956800*l_1*p_53817946*t_1*l_2*p_1858560*l_2*p_21333360*t_2*l_2*p_2830080*l_2*p_85837074*l_2*p_11151360*t_1*l_1*p_1457280*l_2*p_12549552*l_2>")

x4 = MM("M<y_406h*x_1bfeh*d_4d7h*p_44119992*l_2*p_2597760*l_1*p_33391058*l_2*t_1*l_1*p_2640000*l_1*p_12994999*l_1*t_1*l_2*p_2344320*l_2*p_1465428*l_1*t_1*l_1*p_2999040*l_1*p_5762*t_1*l_2*p_1985280*l_1*p_85413713*t_1*l_2*p_1943040*l_2*p_21367881*t_2*l_1*p_2027520*l_1*p_54866*t_1*l_1*p_1457280*l_2*p_76963>")

x14 = MM("M<y_599h*x_237h*d_0e76h*p_139011497*l_2*p_2956800*l_1*p_1912825*l_1*t_1*l_2*p_2787840*l_2*p_33397891*l_1*t_1*l_2*p_1393920*l_1*p_22416*l_2*p_2475840*t_2*l_2*p_2956800*l_1*p_10702214*t_2*l_1*p_2640000*l_1*p_661025*l_2*t_1*l_1*p_1457280*l_2*p_96458467*l_2*p_464640>")

a7 = MM("M<y_534h*x_144dh*d_4c9h*p_232106941*l_1*p_1415040*l_2*p_53793072*t_2*l_2*p_1900800*l_2*p_10674402*l_2*t_1*l_2*p_1900800*l_2*p_1928117*l_1*t_2*l_2*p_4371840*l_2*t_1*l_2*p_2956800*l_1*p_10693639*t_1*l_2*p_1943040*l_2*p_42668385*t_2*l_2*p_2597760*l_1*p_11141857*t_2*l_2*p_2830080*l_2*p_43151365>")

b7 = MM("M<y_4f2h*x_11c3h*d_322h*p_46570772*l_1*p_467520*l_1*p_11595168*l_2*t_1*l_1*p_2999040*l_1*p_21865425*l_1*t_1*l_1*p_1499520*l_1*p_1466374*l_1*t_1*l_1*p_1499520*l_2*p_85329058*t_2*l_1*p_1499520*l_2*p_43256279*t_1*l_2*p_1943040*l_2*p_64000626*t_2*l_1*p_1499520*l_2*p_43181204*t_1*l_2*p_2830080*l_2*p_64017988>")

c7 = MM("M<y_4dh*x_0bb5h*d_7a2h*p_11678766*l_2*p_1900800*l_2*p_25970*t_1*l_1*p_1499520*l_1*p_32552116*l_2*t_1*l_2*p_1920*l_2*p_23280*l_2*p_2819520*t_1*l_1*p_1457280*l_2*p_21441*t_2*l_1*p_1457280*l_2*p_96038950*l_1*p_3840*t_2*l_2*p_1943040*l_2*p_58663*l_2>")

d7 = MM("M<y_110h*x_0aaeh*d_67eh*p_97278295*l_2*p_2597760*l_1*p_13042995*l_1*t_2*l_1*p_2027520*l_1*p_33396803*l_2*t_1*l_1*p_1415040*l_2*p_24192*l_1*p_66240*t_1*l_2*p_2597760*l_1*p_32507814*t_1*l_1*p_1499520*l_2*p_43603527*t_1*l_2*p_2787840*l_2*p_494578*t_2*l_2*p_1985280*l_1*p_43161060>")

c = MM("M<y_477h*x_16e3h*d_0d02h*p_71006193*l_1*p_1457280*l_2*p_21804690*l_1*t_1*l_1*p_2999040*l_1*p_21812385*l_2*t_1*l_1*p_1499520*l_2*p_22755364*l_2*t_1*l_2*p_2386560*l_2*p_43604547*t_1*l_1*p_2880*l_1*p_1296*l_1*p_1548480*t_1*l_1*p_13326720*l_2*t_1*l_2*p_1943040*l_2*p_42715538>")

r = MM("M<y_185h*x_6e2h*d_89ah*p_124672224*l_2*p_43950720*l_1*p_71871360*l_1*t_2*l_1*p_21120*l_2*p_32461776*l_2*t_2*l_2*p_2787840*l_2*p_13053750*l_2*t_2*l_1*p_6547200*l_1*t_1*l_1*p_59917440*l_1*p_213375504*t_2*l_1*p_1499520*l_2*p_106666101*t_1*l_2*p_2830080*l_2*p_42715558*t_2*l_2*p_1985280*l_1*p_85410832>")

s = MM("M<y_7ch*x_8c6h*d_0cb8h*p_229806390*l_2*p_47942400*l_1*p_241760688*l_1*t_1*l_2*p_1943040*l_2*p_21865402*l_1*t_2*l_2*p_2956800*l_1*p_21893383*l_1*t_1*l_1*p_2640000*l_1*p_12502*t_2*l_1*p_1499520*l_1*p_64122852*t_2*l_2*p_3840*l_2*p_3120*l_2*p_6548160*t_1*l_2*p_2386560*l_2*p_42677985*t_1*l_2*p_1900800*l_2*p_214592>")

t = MM("M<y_183h*x_0cb9h*d_4b3h*p_162391248*l_1*p_50160000*l_1*p_222689328*l_1*t_2*l_2*p_1943040*l_2*p_32063514*l_1*t_1*l_2*p_1394880*l_2*p_22791936*l_1*t_1*l_1*p_1499520*l_1*p_10778215*t_2*l_2*p_1943040*l_2*p_85833293*t_1*l_1*p_1499520*l_2*p_43174440*t_1*l_2*p_1985280*l_1*p_43194643>")
\end{lstlisting}}

%%%%%%%%%%%
%%% Listing 4 %%%
%%%%%%%%%%%
\vspace{11pt}
{\footnotesize
\begin{lstlisting}[breaklines=true,language=python,captionpos=b,texcl=false,frame=lines,caption={
Generators $x_2$ and $x_{19}$ for the maximal subgroup $\PSL_2(19){:}2$ of $\MM$ discussed in Section~\ref{secL219}; see also \cite{ourfile_new}. The roles of the other elements are explained in Section~\ref{secL219}. (Note that $x_2$ is distinct from the element $x_2$ in Listing~\ref{fig:secL211}.) \\}, label=fig:secL219]  
x2 = MM("M<y_531h*x_147bh*d_9e6h*p_236396641*l_2*p_1985280*l_1*p_21825835*l_1*t_2*l_1*p_1499520*l_2*p_32991824*l_2*t_2*l_2*p_1415040*l_1*p_24192*l_2*p_2496960*t_1*l_1*p_1499520*l_1*p_12113794*t_1*l_1*p_2027520*l_1*p_86261393*l_2*p_11594880*t_2*l_1*p_1394880*l_2*p_465792*l_1*p_2392320>")

x19 = MM("M<y_7ddh*x_0e39h*d_99h*p_136245180*l_1*p_1499520*l_1*p_23222870*t_1*l_1*p_1499520*l_2*p_1925316*l_1*t_2*l_1*p_2640000*l_1*p_1504850*l_1*t_1*l_2*p_2830080*l_2*p_127990691*t_1*l_2*p_1457280*l_1*p_175110*t_1*l_1*p_1920*l_2*p_10667712*l_2*p_6106560*t_1*l_2*p_2386560*l_2*p_42663561*t_2*l_2*p_2597760*l_1*p_85812125>")

y4 = MM("M<y_163h*x_1c92h*d_608h*p_59179108*l_1*p_2999040*l_1*p_43617991*t_1*l_2*p_2880*l_2*p_2160*l_2*p_2369280*t_2*l_2*p_1457280*l_1*p_1860708*l_2*t_2*l_2*p_2386560*l_2*p_85819731*t_1*l_2*p_1943040*l_2*p_42707029*t_2*l_2*p_2386560*l_2*p_21439072*t_1*l_1*p_1457280*l_2*p_10566>")

c = MM("M<y_0adh*x_128h*d_9bh*p_152633473*l_2*p_60804480*l_1*p_243091248*l_2*t_1*l_1*p_68787840*l_1*p_212044848*l_1*t_2*l_1*p_1499520*l_1*p_32535003*l_1*t_1*l_1*p_2027520*l_1*p_11521*t_1*l_2*p_783360*t_2*l_2*p_2830080*l_2*p_86257556*t_1*l_2*p_1943040*l_2*p_96477752*t_1*l_2*p_2956800*l_1*p_43197549>")

d = MM("M<y_4f1h*x_9bch*d_0f77h*p_106507260*l_1*p_80762880*l_2*p_213375504*t_2*l_1*p_1499520*l_2*p_583047*t_2*l_2*p_1900800*l_2*p_1040998*t_2*l_2*p_2386560*l_2*p_21331401*t_1>")
\end{lstlisting}}

\end{document}